\def\subsection{\@startsection{subsection}{1}%
  \z@{.5\linespacing\@plus.7\linespacing}{.3\linespacing}%
  {\normalfont\scshape}}
\newcommand{\eqnum}{\refstepcounter{equation}\textup{\tagform@{\theequation}}}
\DeclarePairedDelimiter\abs{\lvert\,}{\,\rvert}%
\DeclarePairedDelimiter\norm{\lVert\,}{\,\rVert}%
\theoremstyle{plain}
\newtheorem{theorem}{Theorem}
\newtheorem{corollary}[theorem]{Corollary}
\newtheorem{lemma}[theorem]{Lemma}
\newtheorem{proposition}[theorem]{Proposition}
\theoremstyle{definition}
\newtheorem{remark}[theorem]{Remark}
\newcommand{\R}{\mathbb{R}}
\newcommand{\N}{\mathbb{N}}
\renewcommand{\P}{\mathbb{P}}
\newcommand{\Ps}{\mathcal{P}}
\renewcommand{\S}{\mathcal{S}}
\newcommand{\E}{\mathbb{E}}
\newcommand{\F}{\mathcal{F}}
\newcommand{\M}{\mathcal{M}}
\newcommand{\C}{\mathcal{C}}
\newcommand{\B}{\mathcal{B}}
\newcommand{\Z}{\mathcal{Z}}
\newcommand{\X}{\mathcal{X}}
\newcommand{\Kn}{K^{(n)}}
\newcommand{\KN}{K^{(N)}}
\newcommand{\KNh}{\widehat{K}^{(N)}}
\newcommand{\1}{\mathbbm{1}}
\DeclareMathOperator{\Var}{Var}
\providecommand{\norm}[1]{\left\lVert#1\right\rVert}
\providecommand{\abs}[1]{\left\lvert#1\right\rvert}
\let\oldmarginpar\marginpar
\renewcommand\marginpar[1]{\-\oldmarginpar[\raggedleft\footnotesize #1]%
{\raggedright\footnotesize #1}}
\begin{document}
\medskip
{\sl Dedicated to the Memory of Moshe Shaked}
\medskip
\medskip

\title[]%
{A Central Limit Theorem for Costs in \\ Bulinskaya's Inventory Management Problem \\ When Deliveries Face Delays}
\author[]
{Alessandro Arlotto and J. Michael Steele}

\thanks{A. Arlotto:
The Fuqua School of Business, Duke University, 100 Fuqua Drive, Durham, NC, 27708.
Email address: \texttt{alessandro.arlotto@duke.edu}
}

\thanks{J. M. Steele:
Department of Statistics, The Wharton School, University of Pennsylvania, 3730 Walnut Street, Philadelphia, PA, 19104.
Email address: \texttt{steele@wharton.upenn.edu}
}

%\citationmode{full}

%\citationmode{abbr}

\begin{abstract}
It is common in inventory theory to consider policies that minimize
the expected cost of ordering and holding
goods or materials. Nevertheless,
the realized cost is a random variable, and, as the Saint Petersburg Paradox reminds us, the expected value does not always capture the
full economic reality of a decision problem. Here we take the classic inventory model of \citet{Bulinskaya:TPA1964}, and, by proving an
appropriate central limit theorem, we show in a reasonably rich (and practical) sense
that the mean-optimal policies are economically appropriate. The motivation and the tools are applicable to a large class of
Markov decision problems.

    \medskip
    \noindent{\sc Mathematics Subject Classification (2010)}:
    Primary: 60C05, 90B05;
    Secondary:  60F05, 60J05, 90C39, 90C40.

    \medskip
    \noindent{\sc Key Words:}
    inventory management, Markov decision problems,
    central limit theorem, non-homogeneous Markov chain, Dobrushin coefficient,
    stochastic order, discrete-time martingale.

\end{abstract}

%\date{\today. \\
%\indent \emph{File name:} \texttt{\jobname.tex}
%}

\maketitle

%\doublespacing

%\onehalfspacing

\section{Weighing Costs with More Than Means} \label{se:intro}

In many problems of operations management, the quantity of central interest is the cost $\C_n(\pi^*_n)$ that one incurs
over $n$ periods by following a policy $\pi^*_n$ that is optimal in the sense that
\begin{equation}\label{eq:MeanConditionPi}
\E[\C_n(\pi^*_n)]=\min \big\{ \, \E[\C_n(\pi)]: \pi \in \S_n \, \big\},
\end{equation}
where $\S_n$ is a specified set of feasible policies. This procedure is so commonplace that one may see nothing odd in it.
Nevertheless, just the mention of the Saint Petersburg Paradox is enough to remind
one that economic decisions based on means alone may be quite
ill founded. The realized cost $\C_n(\pi)$ incurred by a policy $\pi$ is a random variable, and, to compare this to the
cost $\C_n(\pi')$ incurred by another policy $\pi'$, one must make an economically meaningful comparison between the distributions
of two random variables.

Perhaps no individual has contributed more to our understanding of stochastic orders than
Moshe Shaked. The theory of orders on distributions was a life-long interest of his,
and this interest also encompassed applications to many fields.
Among his contribution that single out applications in economics, one can mention at least
\citet{MullerScarsiniShaked2002}, \citet{ColangeloScarsiniShaked-InsMathEcon-2005}, \citet{LiShakedChap2007} and
\citet{ShakedSordoSu-MCAP-2012}.
The two further papers of
\citet{MarshallShaked:AAP1983} and \citet{HartmanDrorShaked:GEB2000} make specific note of applications to
inventory management which is our principal focus here.
More broadly, the classic texts of \citet{Shaked-Shantikumar-1994,Shaked-Shanthikumar2007}
have served for a generation as fundamental resources for information on stochastic orders --- in their many varieties and for their many applications.

Here, we focus almost entirely on a well-studied inventory management model that was introduced by
\citet{Bulinskaya:TPA1964}, and the first observation is that it leads one into the same kind of economic quandary that one finds
with the generic mean-focused optimality criterion \eqref{eq:MeanConditionPi}.
In Section \ref{se:conclusions}, we will discuss how one
might approach this quandary using the theory of stochastic order, but,  in the main, we stick to an approach that is decidedly  more classical.

As a first cut, one should note that the
criterion \eqref{eq:MeanConditionPi} becomes economically meaningful precisely in those circumstances where
$\E[\C_n(\pi^*_n)]$ is a viable surrogate for $\C_n(\pi^*_n)$, and,
at a bare minimum, this calls for $\E[\C_n(\pi^*_n)]$ and $\C_n(\pi^*_n)$ to be probabilistically of the same order.
This condition does not fully escape the
Saint Petersburg Paradox, but it does give one some practical guidance.
Moreover, it was found in \citet{ArlottoGansSteele:OR2014} that such closeness in probability holds automatically for a large class of Markov decision problems.

To take this logic one step further, one can look to determine the asymptotic distribution of the realized cost $\C_n(\pi^*_n)$. This is
the approach taken here. Specifically, we prove that there is a central limit theorem (CLT)
for $\C_n(\pi^*_n)$ in the \citeauthor{Bulinskaya:TPA1964} inventory model where one may suffer
delivery delays according to chance.
Unfortunately,  such a CLT still does not free us entirely from the logical tensions
of the Saint Petersburg Paradox.
Nevertheless, it bring us one step closer to
a compelling practical rational for embracing the mean-optimal policy in \citeauthor{Bulinskaya:TPA1964}'s famous model.

\subsection{\citeauthor{Bulinskaya:TPA1964}'s Inventory Model: Delivery Delays Included}

In the classic dynamic inventory model of \citet{Bulinskaya:TPA1964},
one has $n$ periods and $n$ demands
$D_1, D_2, \ldots,D_n$ that are modeled as independent random variables
with a common  density $\psi$ that has support on a bounded subinterval of $[0,\infty)$.
At the beginning of each period $i \in [1:n] \equiv \{1, 2, \ldots, n\}$
one knows the current level of inventory $x$, and the main task is to decide
upon the quantity that should be ordered.
Here it is technically useful to allow $x$ to take both positive and negative values, and in the latter
case, the absolute value $|x|$ is understood to represent the level of \emph{backlogged} demand.
To underscore this possibility, we sometimes call $x$ the \emph{generalized} inventory level.

\citeauthor{Bulinskaya:TPA1964}'s model also allows for some uncertainty about the time when an order is filled. Specifically, the model posits that there is
a fixed, known parameter $q \in [0,1]$ such that after an order is placed
it is filled immediately with probability $q$, but, if it is not filled immediately, then it is guaranteed to be filled
at the beginning of the next period. The associated fulfilment events are assumed to be independent, and we account for them with
independent Bernoulli random variables $Y_1, Y_2, \ldots, Y_n$ where the event $Y_i=1$ indicates that the $i$'th order
is filled immediately.

\citeauthor{Bulinskaya:TPA1964}'s model further accounts for costs of two kinds. First, there is the \emph{cost of ordering}, and the model takes this cost to be
proportional to the size of the order. Specifically, to move the inventory
from level $x$ to the level $y\geq x$, one places an order of size $y-x$,  and one incurs a cost of $c(y-x)$
where the multiplicative constant $c>0$ is a model parameter that we assume to be known.

Second, there is the \emph{carrying cost} that one incurs either by holding some physical inventory or by managing a backlog.
To model this cost, one introduces two
non-negative constants $c_h$ and $c_p$, and one defines a
piecewise linear function $L: \R \rightarrow \R$ by setting
\begin{equation}\label{eq:Def-L}
L(z) =
\begin{cases}
    \hphantom{-}c_h z      & \quad \text{if } z\geq 0 \\
    -c_p z                  & \quad \text{if } z  < 0.
\end{cases}
\end{equation}
The interpretation here is that if $z\geq 0$, then $L(z)$ represents the cost for holding a quantity $z$ of inventory from one period to the next,
while, if $z< 0$, then $L(z)$ represents the penalty cost for managing a backlog of size $-z =|z| \geq 0$.

Given this framework, the carrying cost that is realized in period $i$
depends on whether the order is filled immediately or not.
If the inventory at the beginning of period $i$ is $x$ and
one places an order to bring the inventory up to  the level $y$, then in the case
(i)
in which the order is filled immediately the carrying cost for period $i$ is $L(y - D_i)$, but in the case
(ii)
in which the delivery is delayed to the beginning of the next period the carrying cost is $L(x - D_i)$.
Thus, the carrying cost for period $i$ can always be written as
\begin{equation}\label{eq:carrying-cost}
Y_i L(y - D_i) + (1-Y_i) L(x - D_i).
\end{equation}

Finally, we assume that the ordering cost rate $c$ is strictly smaller than the backlog management penalty rate $c_p$;
thus, it is never optimal to accrue backlog penalty costs if one has the opportunity  to place an order.

\subsection{A Mean-Optimal Policy}\label{sse:MeanOptimalPolicy}
\citeauthor{Bulinskaya:TPA1964} studied this problem in great detail,
and she characterized the structure of the ordering policy that
minimizes the total expected inventory costs over the $n$ periods. That is, she
found the optimal policy in the sense of \eqref{eq:MeanConditionPi}.

In brief, she found values $\gamma_{n,i}(x)$ such that
if the inventory at the beginning of period $i$ is $x$, then it is optimal in the sense of \eqref{eq:MeanConditionPi} to place an order
of size $\gamma_{n,i}(x)-x$ so that the order would bring the inventory up to the level $\gamma_{n,i}(x)$.
We call the map $x \mapsto \gamma_{n,i}(x)$ the \emph{order-up-to function},
and \citeauthor{Bulinskaya:TPA1964} gave a concrete formula for its value.

Specifically, she found that if one sets
\begin{equation}\label{eq:n0}
n_0 = \inf\{ j \in \mathbb{N} : c < c_p(q + j +1) \}
\end{equation}
then there is a sequence of non-decreasing real values
\begin{equation}\label{eq:base-stock-level-monotone}
0 < s_{n_0+2} \leq s_{n_0+3} \leq \cdots\leq s_n \leq s_\infty
\end{equation}
such for $i$ in the \emph{principal range}  $[1:n-n_0)\equiv \{1,2, \ldots, n-n_0-1\}$
one has the \emph{active rule}
\begin{equation}\label{eq:base-stock-policy}
\gamma_{n,i}(x) =
\begin{cases}
    s_{n-i+1}       & \quad \text{if } x \leq s_{n-i+1} \\
    x                  & \quad \text{if } x > s_{n-i+1} .
\end{cases}
\end{equation}
Instead, for $i$ in the \emph{residual range}  $[n - n_0 :  n]$, one has the \emph{passive rule}
\begin{equation}\label{eq:base-stock-policy-bis}
\gamma_{n,i}(x) = x,
\end{equation}
so, for $i$ in the residual range the optimal action is to passively order nothing.

In general, the concrete values of the levels $\{s_i: n_0+2 \leq i \leq n \}$ can only be computed numerically, but those numerical values are not needed here.
Parenthetically, we should also note that the curious starting level
$s_{n_0+2}$ comes about because it is the value that one needs to compute
to characterize $\gamma_{n,i}(x)$ when $i=n-n_0-1$;
i.e. when $i$ is the largest element of the principal range $[1:n-n_0)$.

\citeauthor{Bulinskaya:TPA1964}'s optimal restocking rules determine a natural \emph{inventory process},
where we view $X_{n,i}$ as the generalized inventory at the \emph{beginning of period} $i$, which we define to be the moment before one places the order for period $i$.
If we take the initial inventory level to be $X_{n,1}=x$,
then the restocking rules \eqref{eq:base-stock-policy} and \eqref{eq:base-stock-policy-bis}
tell us that the subsequent value of the generalized inventory level
 at the beginning of period $i+1$ is always given by
\begin{equation}\label{eq:Xi-inventory}
X_{n,i+1} = \gamma_{n,i}(X_{n,i}) - D_i \quad \quad \text{for all } i \in [1:n].
\end{equation}
In particular, the generalized inventory at the beginning of period $i+1$ does not depend on whether
the order in period $i$ was delayed or not.
In fact, a period $i$ order that is delayed is assumed to be filled by the beginning of period $i+1$.

From the recursion \eqref{eq:Xi-inventory},
we see that the sequence $\{X_{n,i}: 1 \leq i \leq n+1\}$ is a non-homogenous Markov chain,
and, if we let $\F_0$ denote the trivial $\sigma$-field and set
\begin{equation}\label{eq:SigmaField}
\F_i=\sigma\{D_1,D_2,\ldots D_i, Y_1, Y_2, \ldots, Y_i\}, \quad \quad \text{ for } i \in [1:n],
\end{equation}
then we see that  $X_{n,i}$ is $\F_{i-1}$-measurable for each $i \in [1:n+1]$.

Now, if we take $0<J<\infty$
to be the least upper bound of the support of the demand distribution $\psi$ and
if we begin with $X_{n,1}=x \in [0, s_\infty]$, then the recursion \eqref{eq:Xi-inventory} tells us that
\begin{align*}%\label{eq:StateSpace}
X_{n, i} \in [- J, s_\infty] \quad &\text{for all }  i \in  [2:n-n_0) \,\, \text{and} \\
X_{n, i} \in [- (n_0+2)J, s_\infty] \quad &\text{for all } i \in  [n-n_0:n+1]. \notag
\end{align*}
The most natural starting inventory level is $X_{n,1}=0$,
and, \emph{de minimus}, we  will assume throughout that $X_{n,1}=x \in [0, s_\infty]$.
As a consequence, we can take the state space
of the generalized inventory process to be  $\X=[\,- (n_0+2)J,\, s_\infty\,]$.
So, in particular, we have that the state space $\X$ of the inventory chain is bounded.

\subsection{Realized Costs}

The restocking policy $\pi^*_n$ determined by \citeauthor{Bulinskaya:TPA1964}'s rules
\eqref{eq:base-stock-level-monotone} and \eqref{eq:base-stock-policy} is optimal in the sense that
it attains the minimal \emph{expected} inventory cost over all feasible policies.
Of course, the actual cost realized by
following the policy $\pi^*_n$ is a random variable, and it turns out to be useful to write it
as the cumulative sum of the ordering cost and the carrying costs.

%When the order placed at the beginning of period $i$ is delayed,
%the quantity of inventory that is carried to the next period is simply $X_{n,i} - D_i$,
%and by \eqref{eq:Xi-inventory}
%we have $X_{n,i} - D_i= X_{n,i} - \gamma_{n,i}(X_{n,i}) + X_{n,i+1}$.

The \emph{one-period cost} $\Ps_{n,i}$ for period $i$ is the sum of
(i) the ordering costs and
(ii) the holding cost that one calculates from \eqref{eq:carrying-cost}.
Thus, when we follow \citeauthor{Bulinskaya:TPA1964}'s  mean-optimal inventory policy,
the one-period cost for period $i$ is given by
\begin{equation}\label{eq:PeriodCost1}
\Ps_{n,i}=
c( \gamma_{n,i}(X_{n,i}) - X_{n,i} )
+ Y_i L (\gamma_{n,i}(X_{n,i}) - D_i) + (1-Y_i) L (X_{n,i} - D_i).
\end{equation}
Thus, the corresponding \emph{realized cost} accrued over time is
\begin{align}\label{eq:TotalCost}
\C_i(\pi^*_n) = \sum_{j=1}^i \Ps_{n,j}, \quad \quad \text{for } i \in [1:n],
\end{align}
and, of course, our main concern is with the total cost $\C_n(\pi^*_n)$ over the full time horizon.

As we noted before, the mean-focused optimality \eqref{eq:MeanConditionPi} of \citeauthor{Bulinskaya:TPA1964}'s policy
is economically meaningful only if the expected value of the
total cost $\C_n(\pi^*_n)$ can serve as a good proxy for the cost that is actually realized.
While there is no \emph{a priori} reason for this to be the case,
we can increase our confidence in the use of $\pi^*_n$
if we learn more about the distribution of the total realized cost $\C_n(\pi^*_n)$.

\subsection{A CLT Mean-Optimal for Inventory Costs}

For \citeauthor{Bulinskaya:TPA1964}'s optimal policy one can show that after the natural centering and scaling
the total realized cost $\C_n(\pi^*_n)$ is asymptotically normal ---  provided that the
demand density satisfies a natural regularity condition.

To be explicit, we say that a demand density $\psi$ is \emph{softly unimodal} provided
that for each $\epsilon \geq 0$ there is a $\widehat{w}=\widehat{w}(\epsilon)\in [-\infty,\infty)$ such that
\begin{align}\label{eq:density-condition-new}
\{w: \psi(w + \epsilon) \leq \psi(w) \} =[\widehat{w}, \infty).
\end{align}
Naturally any unimodal density is softly unimodal,
and any density with two isolated modes is not softly unimodal.
The uniform density on $[0,1]$ is a simple example of a density which is not unimodal
but which is softly unimodal.
Although the condition \eqref{eq:density-condition-new} is a little unconventional,
it is satisfied by essentially all of the
distributions that are commonly considered in inventory theory.
Moreover, it is exactly what is needed in our context.

\begin{theorem}[CLT for Mean-Optimal Inventory Cost]\label{thm:inventory-CLT}
If the demand density $\psi$ is softly unimodal and has bounded support in $[0,\infty)$, then
the inventory cost $\C_n(\pi^*_n)$ that is realized under the mean-optimal policy $\pi^*_n$
satisfies
$$
\frac{\C_n(\pi^*_n) - \E[\C_n(\pi^*_n)]}{\sqrt{\Var[\C_n(\pi^*_n)]}} \Longrightarrow N(0,1), \quad \quad \text{as } n \rightarrow \infty.
$$
\end{theorem}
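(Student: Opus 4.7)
My plan is to exhibit the centered cost $\C_n(\pi^*_n) - \E[\C_n(\pi^*_n)]$ as a sum of bounded martingale differences with respect to the filtration $\{\F_i\}$ defined in \eqref{eq:SigmaField}, and then apply a martingale central limit theorem. Concretely, set
$$
\Delta_{n,i} = \E[\C_n(\pi^*_n) \mid \F_i] - \E[\C_n(\pi^*_n) \mid \F_{i-1}], \quad i \in [1:n],
$$
so that $\C_n(\pi^*_n) - \E[\C_n(\pi^*_n)] = \sum_{i=1}^n \Delta_{n,i}$ is an $\F_i$-martingale sum. The two main ingredients needed are a uniform bound $|\Delta_{n,i}| \leq M$ independent of $n$ and $i$, and a linear lower bound $\Var[\C_n(\pi^*_n)] \geq c n$; together these should reduce the conclusion to a standard Brown/Hall--Heyde martingale CLT.

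The uniform bound on $|\Delta_{n,i}|$ should follow from the boundedness of the state space $\X$, the compactness of the support of $\psi$, and a Dobrushin contraction argument for the transition kernels of $\{X_{n,i}\}$. Because the mean-optimal rule \eqref{eq:base-stock-policy} maps every inventory level $x \leq s_{n-i+1}$ to the single base-stock value $s_{n-i+1}$ before randomizing by $-D_i$, the transition kernel of $X_{n,i+1}$ admits a nontrivial minorization on a substantial subset of $\X$ at each $i$ in the principal range, and hence a Dobrushin coefficient uniformly bounded away from $1$. This delivers geometric decay of $|\E[\Ps_{n,j} \mid \F_i] - \E[\Ps_{n,j} \mid \F_{i-1}]|$ in $j-i$ and, after summing the geometric series, the desired uniform bound on the martingale differences.

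The bulk of the work is the linear lower bound $\Var[\C_n(\pi^*_n)] \geq c n$, and this is where the soft unimodality of $\psi$ enters essentially. The target is to show that for at least a linear number of indices $i$ in the principal range one has $\E[\Delta_{n,i}^2 \mid \F_{i-1}] \geq \sigma^2 > 0$ almost surely. Condition \eqref{eq:density-condition-new} ensures that for some $\epsilon_0 > 0$ the total variation distance between the law of $-D_1$ and that of $-D_1 + \epsilon_0$ is bounded below uniformly; this bound should propagate through the one-period cost \eqref{eq:PeriodCost1} to yield a uniform lower bound on the variability that stage $i$ contributes. The same Dobrushin mixing used above should then deliver the conditional-variance stability $\sum_i \E[\Delta_{n,i}^2 \mid \F_{i-1}] / \Var[\C_n(\pi^*_n)] \to 1$ in probability, while the Lindeberg condition is automatic since $|\Delta_{n,i}| \leq M$ and the normalizer $\sqrt{\Var[\C_n(\pi^*_n)]} \geq \sqrt{c n}$ diverges.

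The hard part, I expect, is precisely this lower bound on the conditional variance: one must rule out configurations in which the one-period cost at stage $i$ collapses to an $\F_{i-1}$-measurable function. Soft unimodality is tailored to exclude exactly this kind of degeneracy, since it forbids the shifted density $\psi(\cdot+\epsilon)$ from agreeing with $\psi(\cdot)$ on the whole line and so prevents the cost from becoming constant in $(D_i, Y_i)$ for an asymptotically dominant range of base-stock levels. Once the variance lower bound is secured, the remainder is a routine verification of the martingale CLT hypotheses.
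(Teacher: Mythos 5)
Your high-level plan is close to the paper's: the paper also writes $\C_n(\pi^*_n) - \E[\C_n(\pi^*_n)]$ as a sum of Doob martingale differences $d_{n,i} = \M_{n,i} - \M_{n,i-1}$ with $\M_{n,i} = \E[\C_n(\pi^*_n)\mid\F_i]$ (Proposition \ref{pr:OptMart}), and it too reduces the CLT to two ingredients: boundedness of the summands and linear variance growth. However, there are several genuine gaps in how you propose to execute this.

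First, you have misallocated the role of soft unimodality. In the paper, the condition \eqref{eq:density-condition-new} is used exclusively in Step One (Lemmas \ref{lm:m-TC} and \ref{lm:probability-bound}) to obtain the uniform \emph{upper} bound $\delta(\Kn_{i,i+1}) \leq \kappa < 1$ on the Dobrushin coefficient over the principal range; your claim that soft unimodality yields a \emph{lower} bound on some total-variation quantity that then propagates into a variance lower bound is not what the condition gives, and it is not how the paper's variance argument works. Your minorization sketch for the Dobrushin bound is also too loose: the base-stock rule collapses only the set $\{x : x\le s_{n-i+1}\}$ to a single image, while states with $x > s_{n-i+1}$ transition by a translation $x - D_i$. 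Showing that \emph{all pairs} of starting states have overlapping one-step laws requires comparing a density to its translates, which is precisely where soft unimodality enters (Lemma \ref{lm:m-TC}) together with Bulinskaya's bounds on $s_{n_0+2}$ and $s_\infty$ (Lemma \ref{lm:probability-bound}). A bare ``nontrivial minorization on a substantial subset'' does not close this.

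Second, the paper's variance lower bound (Lemma \ref{lm:VarianceLowerBound}) does not use soft unimodality at all. It conditions on the event $G_i$ that two independent copies of the demand both land in $[s_\infty, J]$ and that the delay indicators agree, on which Lemma \ref{lm:Technical} gives an exact linear formula $v_k(x)-v_k(x') = (x'-x)\{c + c_p(1-q)\}$ for $x, x' \le 0$, and from there a pointwise positive lower bound on $\E[d_{n,i}^2 \mid \F_{i-1}]$ follows by an explicit computation. Your ``propagate a TV lower bound through the one-period cost'' is not an argument; you would need something concrete like the paper's $G_i$/Lemma \ref{lm:Technical} mechanism.

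Third, you describe the conditional-variance stability $\sum_i \E[\Delta_{n,i}^2\mid\F_{i-1}]/\Var[\C_n(\pi^*_n)] \to 1$ as following from ``the same Dobrushin mixing'' and then as ``routine.'' This is the genuinely hard part of a direct martingale-CLT attack, and the paper deliberately does not attempt it: it instead invokes the ready-made CLT for temporally non-homogeneous Markov chains (Theorem \ref{th:CLT-nonhomogeneous-chain} and Corollary \ref{cor:CLT} from \citealp{ArlSte:MOR2016}), whose hypotheses are exactly bounded path functionals, a uniformly positive minimal ergodic coefficient, and divergent variance. By encoding the pair $(X_{n,i}, Y_i)$ as a non-homogeneous Markov chain $Z_{N,i}$ and writing $\C_n(\pi^*_n)$ as a bounded function of consecutive pairs, the paper hands the conditional-variance problem to that theorem. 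If you want to avoid citing it, you would essentially have to reprove it, and that is far from a routine verification of Brown/Hall--Heyde.

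Finally, a smaller point: the paper also splits $\C_n = \C_n' + \C_n''$ along the principal/residual range, because the passive rule in the residual range ruins the base-stock structure that makes Lemmas \ref{lm:m-TC}--\ref{lm:probability-bound} work. Your sketch does not address what happens on the residual range; one needs the observation that $\C_n''$ has only $n_0+1$ bounded summands, hence is negligible after normalization by $\sqrt{\Var[\C_n]}$.
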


We will prove this theorem over the next several sections.
In the first of these, we recall a CLT for non-homogenous Markov
chains from \citet{ArlSte:MOR2016}, where there is also a discussion of the  CLT for $\C_n(\pi^*_n)$ in
the context of the much simpler \citeauthor{Bulinskaya:TPA1964} model where one does not have delayed deliveries.

Our main arguments are distributed over Sections \ref{se:connect-Thm1-2}, \ref{se:step1},
and \ref{se:step2}.
Finally, in Section \ref{se:conclusions} we note three further steps that one can take
toward the economic justification for the use of the
mean-optimal policies in \citeauthor{Bulinskaya:TPA1964}'s model and related problems.
One of these depends on the theory of stochastic orders.

\section{Background on Non-Homogeneous Markov Chains} \label{se:CLT-review}

The proof of Theorem \ref{thm:inventory-CLT} depends  on a recent theorem for functions on the path of a finite horizon,
temporally non-homogenous Markov chain.
Here, we generically denote such a chain by $\{Z_{N,i}: 1 \leq i \leq N\}$. We also assume that the horizon $N \in \N$ is  fixed,
and the state space $\Z$ of the chain is a Borel space.

If $\B(\Z)$ denotes the class of Borel subsets of $\Z$ and $\nu$ is the initial distribution of the chain,
then we have the relation $\nu(B)=\P(Z_{N,1}\in B)$ for all $B \in \B(\Z)$.
The laws for the subsequent values of the chain  are then determined by
specifying the one-step transition kernels $\{\KN_{i,i+1} : 1 \leq i <N\}$ such that
\begin{equation*}%\label{eq:TransitionKernels}
\KN_{i,i+1}(z,B) = \P(Z_{N,i+1} \in B \,|\, Z_{N,i} = z)
\quad \text{for }  z \in \Z \,\, \text{and }B \in \B(\Z).
\end{equation*}
While the transition kernels can be quite general, we do require some conditions.
To state these we first recall that for any
Markov transition kernel $K$ on $\Z$, the \emph{\citeauthor{Dobrushin:TPA1956} contraction coefficient} of $K$ is defined by
\begin{equation}\label{eq:contraction-coefficient-def}
    \delta(K) = \sup_{\substack{z_1, z_2 \in \Z \\ B \in \B(\Z)}} \abs{ K(z_1, B) - K(z_2, B) },
\end{equation}
and the corresponding \emph{ergodic coefficient} of $K$ is given by
\begin{equation*}%\label{eq:ergodic-coefficient-def}
\alpha(K) = 1 - \delta(K).
\end{equation*}

Our main concern here is with functions on the paths of an array of time non-homogeneous Markov chains.
To define these functions we first fix an integer $m$ and for each $n=1,2, \ldots$ we introduce an array of $n$ real valued functions
of $1+m$ variables,
$$
\varphi_{n,i}: \Z^{1+m} \rightarrow \R, \quad i \in [1:n].
$$
If we now set $N=n+m$, then the object of central interest is the sum
\begin{equation}\label{eq:Sn}
S_n = \sum_{i=1}^n  \varphi_{n,i}(Z_{N,i}, \ldots, Z_{N,i+m}).
\end{equation}
Here, one should note that this sum with $n$ summands is a function of the $N$ variables $\{Z_{N,i}: 1 \leq i \leq N \}$ where $N=n+m$.

\subsection{A General Central Limit Theorem}

In favorable circumstances, when the sums $\{S_n: n \geq 1\}$ defined by \eqref{eq:Sn} are centered about their mean and
scaled by their standard deviations, one would naturally expect that they converge in distribution to the standard Gaussian.
\citet{ArlSte:MOR2016} proved that this is indeed the case provided that one has a modest
compatibility relation between the sizes of three quantities:
(i) the sizes of the ergodic coefficients $\alpha(\KN_{i,i+1})$,
(ii) the functions $\{\varphi_{n,i} : 1 \leq i \leq n\}$, and
(iii) the variance of $S_n$.
\citet{Dobrushin:TPA1956} and \citet{SetVar:EJP2005} had
earlier proved theorems that cover the special case where $m=0$ (or $N=n$).

To write that compatibility relation compactly for general arrays of transition kernels,
it is useful to introduce one further quantity.
For given values of $m$, $n$, and $N=n+m$, the key quantity in the compatibility relation is given by
\begin{equation*}%\label{eq:minimal-ergodic-coeff}
\alpha_n = \min_{1 \leq i < n} \alpha(\KN_{i,i+1}).
\end{equation*}
There should be no cause for confusion if we call this the
$n$'th \emph{minimal ergodic coefficient}, since, in the special case with $m=0$,
it reduces to the classical quantity of the same name.
The benefit of $\alpha_n$ is that
gives us an exceptionally useful measure of the ``randomness"
of the process determined by the $N$'th row of
the array of transition kernels. Here and subsequently we use $\norm{f}_\infty$ to denote
the essential supremum of the absolute value of the function $f$.

\begin{theorem}[\citealp{ArlSte:MOR2016}]\label{th:CLT-nonhomogeneous-chain}
If there are constants $C_1, C_2,  \ldots$ such that
\begin{equation*}%\label{eq:asymptotic-condition-for-clt-1}
\max_{1 \leq i \leq n} \norm{\varphi_{n,i}}_\infty  \leq C_n
\quad
\text{and}
\quad
C_n^2 \alpha_n^{-2}  = o(\Var[S_n]),
\end{equation*}
then one has the convergence in distribution
\begin{equation}\label{eq:CLTLimit}
\frac{S_n - \E[S_n]}{\sqrt{\Var[S_n]}} \Longrightarrow N(0,1), \quad \quad \text{ as } n \rightarrow \infty.
\end{equation}
\end{theorem}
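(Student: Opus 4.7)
The plan is to realize $S_n - \E[S_n]$ as the terminal value of a Doob martingale with respect to the natural filtration $\F_k^{(N)} := \sigma(Z_{N,1},\ldots,Z_{N,k})$ of the chain, to control each martingale increment by iterating the Dobrushin contraction, and then to apply a standard martingale central limit theorem. Define
$$
\xi_{n,k} := \E[S_n \mid \F_k^{(N)}] - \E[S_n \mid \F_{k-1}^{(N)}], \qquad 1 \leq k \leq N,
$$
so that $\{\xi_{n,k}\}_{k=1}^N$ is a martingale difference sequence with $S_n - \E[S_n] = \sum_{k=1}^N \xi_{n,k}$.

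The first step is the uniform bound $|\xi_{n,k}| \leq \kappa_m\,C_n/\alpha_n$, for a constant $\kappa_m$ depending only on $m$. Splitting the summands of $S_n$ according to where the $m$-window $[i, i+m]$ sits relative to $k$ gives three cases: terms with $i+m < k$ contribute $0$ since both conditional expectations coincide with $\varphi_{n,i}(Z_{N,i},\ldots,Z_{N,i+m})$; at most $m+1$ ``straddling'' indices with $i \leq k \leq i+m$ each contribute at most $4 C_n$; and for $i > k$ the Markov property identifies
$$
h_i(z) := \E\bigl[\varphi_{n,i}(Z_{N,i}, \ldots, Z_{N,i+m}) \mid Z_{N,k} = z\bigr]
$$
with an iterated integral against $K^{(N)}_{k,k+1}, K^{(N)}_{k+1,k+2}, \ldots, K^{(N)}_{i-1,i}$. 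Repeated use of $\Osc(K f) \leq (1 - \alpha(K))\Osc(f)$ yields $\Osc(h_i) \leq 2 C_n (1 - \alpha_n)^{i-k}$, and summing the geometric series over $i > k$ absorbs one factor of $1/\alpha_n$, producing the master bound.

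Second, I would apply the Brown (or Hall--Heyde) martingale CLT to the normalized differences $\widetilde{\xi}_{n,k} := \xi_{n,k}/\sigma_n$ with $\sigma_n^2 := \Var[S_n]$. The master bound immediately delivers
$$
\max_{1 \leq k \leq N} |\widetilde{\xi}_{n,k}| \leq \frac{\kappa_m\,C_n}{\alpha_n\,\sigma_n} \longrightarrow 0
$$
by the hypothesis $C_n^2 \alpha_n^{-2} = o(\sigma_n^2)$, so the conditional Lindeberg condition holds for free: for each $\epsilon>0$ the truncation indicators $\1\{|\widetilde{\xi}_{n,k}|>\epsilon\}$ are everywhere zero for all sufficiently large $n$.

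The main obstacle is the conditional variance condition
$$
V_n := \sum_{k=1}^N \E\bigl[\widetilde{\xi}_{n,k}^2 \mid \F_{k-1}^{(N)}\bigr] \xrightarrow{\,P\,} 1.
$$
Since $\E[V_n] = 1$ by the martingale difference orthogonality, by Chebyshev it suffices to prove $\Var[V_n] \to 0$. Each conditional variance $\E[\xi_{n,k}^2 \mid \F_{k-1}^{(N)}]$ is a bounded function $g_k(Z_{N,k-1})$ of the single chain state at time $k-1$, with $\norm{g_k}_\infty \leq \kappa_m^2\,C_n^2/\alpha_n^2$, so $\sigma_n^2 V_n$ is itself a sum of bounded functions along the non-homogeneous chain. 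Iterating the Dobrushin contraction once more produces a mixing-type covariance bound of the shape
$$
\bigl|\Cov\bigl(g_j(Z_{N,j-1}),\,g_k(Z_{N,k-1})\bigr)\bigr| \leq 2\,\norm{g_j}_\infty\,\norm{g_k}_\infty\,(1 - \alpha_n)^{|j - k|},
$$
and summing the resulting geometric rows bounds $\Var[V_n]$. Pushing this estimate to $o(1)$ inside the slack afforded by the single hypothesis $C_n^2 \alpha_n^{-2} = o(\sigma_n^2)$ is the delicate step: the crude master bound on $|\xi_{n,k}|$ is not sufficient, and one must refine the past/straddle/future decomposition of $\xi_{n,k}^2$ and exploit the martingale orthogonality $\E[\xi_{n,j}\xi_{n,k}] = 0$ to cancel the spurious $1/\alpha_n$ that would otherwise appear. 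Once $V_n \to 1$ in probability is in hand, the Brown--Hall--Heyde martingale CLT delivers \eqref{eq:CLTLimit}.
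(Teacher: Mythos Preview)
First, note that the present paper does not prove Theorem~\ref{th:CLT-nonhomogeneous-chain}: it is quoted from \citet{ArlSte:MOR2016} as background, so there is no in-paper proof to compare against. Your proposal therefore has to be judged on its own merits and against the original source, which in turn extends \citet{SetVar:EJP2005}.

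Your overall architecture --- Doob martingale decomposition plus a martingale CLT, with the increment bound $|\xi_{n,k}|\lesssim C_n/\alpha_n$ obtained by iterating the Dobrushin contraction over the ``future'' terms --- is indeed the framework used in those references, and your verification of the Lindeberg condition is correct.

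The genuine gap is exactly where you flag it: the conditional variance condition $V_n\to 1$ in probability. Two concrete problems. First, a minor one: for $m\geq 1$ the quantity $\E[\xi_{n,k}^2\mid\F_{k-1}^{(N)}]$ is \emph{not} a function of the single state $Z_{N,k-1}$, because the straddling summands make $\xi_{n,k}$ a function of the block $(Z_{N,k-m},\ldots,Z_{N,k})$; this is reparable by passing to the $m$-block chain, but your statement as written is false. Second, and more seriously, the covariance bound you sketch yields, after summing the geometric rows and using $\sum_k \E[g_k]=\sigma_n^2$ together with $\norm{g_k}_\infty\leq \kappa_m^2 C_n^2/\alpha_n^2$, only
\[
\Var[V_n]\;\lesssim\;\frac{C_n^2}{\alpha_n^{3}\,\sigma_n^2}
\;=\;\frac{1}{\alpha_n}\cdot\frac{C_n^2}{\alpha_n^{2}\,\sigma_n^2},
\]
which carries one surplus factor of $\alpha_n^{-1}$ beyond the hypothesis $C_n^2\alpha_n^{-2}=o(\sigma_n^2)$ and need not tend to zero. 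Your appeal to the orthogonality $\E[\xi_{n,j}\xi_{n,k}]=0$ cannot rescue this: that identity is a second-moment relation and is already fully consumed in establishing $\E[V_n]=1$; controlling $\Var[V_n]$ is a fourth-moment problem in the $\xi$'s, to which the orthogonality contributes nothing further. Removing the extra $\alpha_n^{-1}$ is precisely the substantive content of the proofs in \citet{SetVar:EJP2005} and \citet{ArlSte:MOR2016}, and it requires a sharper analysis of the structure of the $g_k$ (or an alternative route to $V_n\to 1$) that your sketch does not supply. As it stands, the proposal is a correct outline that stops just before the one hard step.
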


\begin{corollary}\label{cor:CLT}
If there are constants $c > 0$ and $C<\infty$ such that
$$\alpha_n \geq c \quad \text{and} \quad  C_n \leq C \, \text{ for all } n \geq 1,$$
then one has the asymptotic normality \eqref{eq:CLTLimit}
whenever $\Var[S_n] \rightarrow \infty$ as $n\rightarrow \infty$.
\end{corollary}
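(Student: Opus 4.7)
The plan is simply to verify that the two hypotheses of Theorem \ref{th:CLT-nonhomogeneous-chain} are in force, and then to invoke that theorem directly. The first hypothesis asks for a sequence of constants $C_1, C_2, \ldots$ with $\max_{1 \leq i \leq n} \norm{\varphi_{n,i}}_\infty \leq C_n$; under the assumption of the corollary we may take $C_n \equiv C$, which is the same constant bound appearing in the hypothesis. So the first condition is immediate.

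For the second condition I would bound the quantity $C_n^2 \alpha_n^{-2}$ uniformly in $n$. Since $\alpha_n \geq c > 0$ and $C_n \leq C < \infty$ for all $n \geq 1$, one has
\begin{equation*}
C_n^2 \alpha_n^{-2} \leq \frac{C^2}{c^2}, \quad \text{for all } n \geq 1.
\end{equation*}
In particular $C_n^2 \alpha_n^{-2} = O(1)$, and the additional assumption $\Var[S_n] \rightarrow \infty$ as $n \rightarrow \infty$ then gives
\begin{equation*}
\frac{C_n^2 \alpha_n^{-2}}{\Var[S_n]} \leq \frac{C^2/c^2}{\Var[S_n]} \rightarrow 0,
\end{equation*}
so that $C_n^2 \alpha_n^{-2} = o(\Var[S_n])$. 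Both hypotheses of Theorem \ref{th:CLT-nonhomogeneous-chain} are thus satisfied, and the conclusion \eqref{eq:CLTLimit} follows from that theorem. There is no real obstacle here; the corollary is a one-step specialization meant to isolate the common case that will be used in the proof of Theorem \ref{thm:inventory-CLT}, where the actual work will lie in producing the uniform lower bound on $\alpha_n$ and the uniform upper bound on $C_n$, together with a lower bound on $\Var[\C_n(\pi^*_n)]$ that grows to infinity.
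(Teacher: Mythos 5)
Your proof is correct and is the intended (and really the only) argument: one simply observes that the uniform bounds $\alpha_n \geq c$ and $C_n \leq C$ make $C_n^2\alpha_n^{-2} = O(1)$, so the condition $C_n^2\alpha_n^{-2} = o(\Var[S_n])$ reduces to $\Var[S_n]\to\infty$, and Theorem \ref{th:CLT-nonhomogeneous-chain} applies directly. The paper does not spell out a separate proof of the corollary, and your write-up matches the routine specialization the authors clearly had in mind.
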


\section{Connecting Theorems  \ref{thm:inventory-CLT} and \ref{th:CLT-nonhomogeneous-chain}} \label{se:connect-Thm1-2}

The key observation is that with some restructuring
the realized inventory cost $\C_n(\pi^*_n) $
in \eqref{eq:TotalCost} can be seen as a sum of the form \eqref{eq:Sn}.
In particular, we need to construct an appropriate temporally non-homogeneous
Markov chain.

We do not apply Theorem \ref{th:CLT-nonhomogeneous-chain} to the inventory process \eqref{eq:Xi-inventory},
but rather to a related process on the enlarged state space $\Z \stackrel{\rm def}{=}\X\times \{0,1\}$
that we define by setting
\begin{equation*}%\label{eq:Zi}
Z_{N,i} = (X_{n,i}, Y_i) \quad \quad \text{ for } 1 \leq i \leq N =  n+1.
\end{equation*}
If we now set
$$
\varphi_{n,i}((x,y),(x',y'))
=  c( \gamma_{n,i}(x) - x ) + y L (x') + (1-y) L (x - \gamma_{n,i}(x) + x'),
$$
then by \eqref{eq:PeriodCost1} the one period cost equals $\varphi_{n,i}(Z_{N,i}, \, Z_{N,i+1})$
and the total cost $\C_n(\pi^*_n)$ defined by \eqref{eq:TotalCost} has the representation
\begin{equation*}%\label{eq:TotalCostNew}
\C_n(\pi^*_n) = \sum_{i=1}^n \varphi_{n,i}(Z_{N,i}, \, Z_{N,i+1}).
\end{equation*}
We noted in Subsection \ref{sse:MeanOptimalPolicy} that the
state space $\X$ is bounded, so by the piecewise linearity of $L$,
we see that the cost functions $\{\varphi_{n,i}: 1 \leq i \leq n\}$ are uniformly bounded.

Now, recalling the principal range $[1:n-n_0)$ and the residual range $[n-n_0:n]$ that determine the
restocking rules \eqref{eq:base-stock-policy} and \eqref{eq:base-stock-policy-bis},
we consider the decomposition
\begin{align}\label{eq:Cn-decomposition}
\C_n(\pi^*_n) &= \sum_{i=1}^{n-n_0-1} \varphi_{n,i}(Z_{N,i}, \, Z_{N,i+1}) + \sum_{i=n - n_0}^{n} \varphi_{n,i}(Z_{N,i}, \, Z_{N,i+1}) \\
&\stackrel{\rm def}{=} \C_n'(\pi^*_n)+\C_n''(\pi^*_n). \notag
\end{align}
The sum $\C_n''(\pi^*_n)$ has $n_0+1$ terms,
and  we know by \eqref{eq:n0} that $n_0$ is bounded by a constant that does not
depend on $n$. Since the
summands of $\C_n''(\pi^*_n)$ are uniformly bounded,
we see that the norm $\norm{\C_n''(\pi^*_n)}_\infty$ is also bounded independently of $n$.
Consequently, if $\Var[ \C_n'(\pi^*_n)] \rightarrow \infty$ and if $\C_n'(\pi^*_n)$ satisfies the CLT, then
we will find that
$\C_n(\pi^*_n)$ also satisfies the same CLT.

To prove the CLT for  $\C_n'(\pi^*_n)$ there are two basic steps --- neither of which is immediate.
First, one needs an appropriate lower bound on the minimal
ergodic coefficient of the chain $\{Z_{N,i} : 1 \leq i \leq n-n_0\}$.
Second, one needs to show that the variance of the first sum $\C_n'(\pi^*_n)$
grows to infinity as $n \rightarrow \infty$.

\section{Step One: Lower Bounds for Ergodic Coefficients}\label{se:step1}

Given the construction of the inventory process $\{X_{n,1}, X_{n,2}, \ldots, X_{n,n+1}\}$,
we can define one-step transition kernels on the state space $\X$ by setting
$$
\Kn_{i,i+1}(x,A) = \P(  X_{n,i+1} \in A \,|\,  X_{n,i} = x )
$$
for all $x \in \X$, $A \in \B(\X)$ and $i \in [1:n]$.
Similarly, for our newly constructed process $\{Z_{N,i}: 1 \leq i \leq N\}$
we have one-step transition kernels on $\Z=\X\times\{0,1\}$ that we can write as
$$
\KNh_{i,i+1}(z,B) = \P(  Z_{N,i+1} \in B \,|\,  Z_{N,i} = z ),
$$
where $z \in \Z$, $B \in \B(\Z)$, and $i \in [1:N)$.

If we set $B_0=B \cap \{z=(x,y): y=0\}$ and $B_1=B \cap \{z=(x,y): y=1\}$
then $B_0$ and $B_1$ are disjoint Borel subsets of $\Z$
with union $B$.
Thus, if we also set $A_0=\{x: (x, 0) \in B_0\}$ and $A_1=\{x: (x, 1) \in B_0\}$,
then by the independence assumptions from Section \ref{se:connect-Thm1-2} we have
\begin{align}\label{eq:preconvex}
\KNh_{i,i+1}(z,B)&=\KNh_{i,i+1}(z,B_0)+\KNh_{i,i+1}(z,B_1)\\
&=(1-q)\Kn_{i,i+1}(x,A_0) +q \Kn_{i,i+1}(x,A_1).\notag
\end{align}
By the decomposition \eqref{eq:preconvex} and the definition \eqref{eq:contraction-coefficient-def} of the Dobrushin coefficient,
 we now have a remarkably nice relationship:
\begin{equation}\label{eq:DobrushinRelationship}
\delta(\KNh_{i,i+1}) \leq \delta(\Kn_{i,i+1}).
\end{equation}

From \eqref{eq:DobrushinRelationship} we see that we can direct our efforts toward obtaining a
uniform upper bound on $\delta(\Kn_{i,i+1})$.
We begin this analysis with general lemma that
also explains how the soft unimodality condition \eqref{eq:density-condition-new} helps us.

\begin{lemma}\label{lm:m-TC}
If the density $\psi$ of $D_1$ satisfies the soft unimodality condition \eqref{eq:density-condition-new},
then for $\epsilon_i =|\gamma_{n,i}(x') - \gamma_{n,i}(x)|$
one has
\begin{equation}\label{eq:TV-m}
\sup_{A \in \B(\X)} \abs{ \Kn_{i,i+1}(x',A) -\Kn_{i,i+1}(x,A) } = \P(\widehat{w} \leq D_1 \leq \widehat{w}+ \epsilon_i),
\end{equation}
where $\widehat{w}=\widehat{w}(\epsilon_i)$ is the value guaranteed by the condition \eqref{eq:density-condition-new}.
\end{lemma}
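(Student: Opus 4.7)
The plan is to reduce the supremum in \eqref{eq:TV-m} to a total‐variation calculation between two translates of the demand density, apply a change of variables, and then invoke the soft unimodality condition \eqref{eq:density-condition-new} to collapse the resulting positive‐part integral.

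First, I would observe that the quantity in \eqref{eq:TV-m} depends on $(x,x')$ only through the absolute shift $\epsilon_i$, so one may assume without loss of generality that $\gamma_{n,i}(x') \geq \gamma_{n,i}(x)$ and set $a = \gamma_{n,i}(x)$, $b = \gamma_{n,i}(x')$, $\epsilon_i = b - a \geq 0$. By the recursion \eqref{eq:Xi-inventory}, the conditional law of $X_{n,i+1}$ given $X_{n,i}=x$ is that of $a - D_i$, which has Lebesgue density $z \mapsto \psi(a - z)$; analogously at $x'$ with $a$ replaced by $b$. The standard identification of total variation distance between two absolutely continuous probability measures then yields
\begin{equation*}
\sup_{A \in \B(\X)} \abs{\Kn_{i,i+1}(x', A) - \Kn_{i,i+1}(x, A)} = \int \bigl(\psi(b-z) - \psi(a-z)\bigr)^+ \, dz.
\end{equation*}

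Second, I would change variables $w = a - z$, so that $b - z = w + \epsilon_i$, to recast the right-hand side as $\int (\psi(w + \epsilon_i) - \psi(w))^+ \, dw$. Since $\psi$ and its translate $\psi(\cdot + \epsilon_i)$ both integrate to one, the signed integral of $\psi(w + \epsilon_i) - \psi(w)$ vanishes, and consequently
\begin{equation*}
\int \bigl(\psi(w+\epsilon_i) - \psi(w)\bigr)^+ dw = \int \bigl(\psi(w) - \psi(w+\epsilon_i)\bigr)^+ dw.
\end{equation*}

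Third, the soft unimodality condition \eqref{eq:density-condition-new} says precisely that the set on which $\psi(w) - \psi(w + \epsilon_i) \geq 0$ is the half-line $[\widehat w, \infty)$, with $\widehat w = \widehat w(\epsilon_i)$. The last integral therefore collapses to
\begin{equation*}
\int_{\widehat w}^\infty \psi(w) \, dw - \int_{\widehat w}^\infty \psi(w + \epsilon_i) \, dw = \P(D_1 \geq \widehat w) - \P(D_1 \geq \widehat w + \epsilon_i),
\end{equation*}
which equals $\P(\widehat w \leq D_1 \leq \widehat w + \epsilon_i)$, as claimed.

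The only step with any genuine content is the third one: the soft unimodality hypothesis is exactly what one needs to replace a positive-part integral with a clean difference of tail probabilities, so that is where the real work of the lemma lives. The first two steps are essentially bookkeeping — they rely on the $L^1$-characterization of total variation between densities and on the translation invariance that arises because the kernels at $x$ and $x'$ differ only by a shift of their common shape $\psi(\cdot)$.
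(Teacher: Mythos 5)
Your proof is correct and follows essentially the same route as the paper's: reduce the supremum to a comparison between $\psi$ and its $\epsilon_i$-translate, then use soft unimodality to identify the maximizing set as $[\widehat{w},\infty)$. The only difference is cosmetic: you invoke the $L^1$-characterization of total variation explicitly, which if anything makes the claimed \emph{equality} in \eqref{eq:TV-m} a bit cleaner to see than the paper's sequence of inequalities, since both conditional laws are supported in $\X$ so that the supremum over $\B(\X)$ really is the full total-variation distance.
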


\begin{proof}
Given $x \in \X$ and a Borel set $A\subseteq \X$, we introduce the Borel set
$$
A_x = \gamma_{n,i}(x) -A,
$$
so that the transition kernel $\Kn_{i,i+1}(x, A)$ can be written as
$$
\Kn_{i,i+1}(x, A)  = \P (X_{n,i+1} \in A \,|\, X_{n,i}=x ) = \P( D_{1} \in A_x ) = \int_{A_x} \psi(w) \, dw.
$$
Without loss of generality we can assume that $x \leq x'$, so the restocking formula
\eqref{eq:base-stock-policy} gives us
$\gamma_{n,i}(x) \leq \gamma_{n,i}(x')$, and for
$\epsilon_i = \gamma_{n,i}(x') - \gamma_{n,i}(x) \geq 0$
we find
$$
\Kn_{i,i+1}(x', A)  = \P (X_{n,i+1} \in A \,|\, X_{n,i}=x' ) = \P( D_{1} - \epsilon_i \in A_x ) = \int_{A_x} \psi(w+\epsilon_i) \, dw.
$$
Taking the difference of the last two displays gives us
\begin{align*}
\abs{ \Kn_{i,i+1}(x',A) -\Kn_{i,i+1}(x,A) }& = \bigg| \int_{A_x} \psi(w) \, dw - \int_{A_x} \psi(w+\epsilon_i)  \, dw \bigg| \\
& \leq \sup_{C \in \B(\R)} \bigg|\int_{C} \psi(w) \, dw - \int_{C} \psi(w+\epsilon_i) \, dw \bigg|.
\end{align*}
Here the supremum is attained at $C^*=\{ w: \psi(w+\epsilon_i)\leq \psi(w)\}$, and by the
soft unimodality  condition \eqref{eq:density-condition-new} for $\psi$, we have $C^*=[\widehat{w}, \infty)$. Hence, we have
\begin{align*}
\sup_{A \in \B(\X)} \abs{ \Kn_{i,i+1}(x',A) & -\Kn_{i,i+1}(x,A) }
\leq \int_{\widehat{w}}^\infty \big\{\psi(x) -\psi(x+\epsilon_i) \big\} \, dx \\
& = \P(D_1 \geq  \widehat{w})-\P(D_1-\epsilon_i \geq \widehat{w}) = \P(\widehat{w}\leq D_1 \leq \widehat{w}+\epsilon_i),
\end{align*}
just as needed to complete the proof of the lemma.
\end{proof}

Now we need a uniform estimate for the bounding probability in \eqref{eq:TV-m}. An explicit bound can be given
only over the principal range, and this is the reason why we work with the decomposition \eqref{eq:Cn-decomposition}.
The argument is not difficult but it requires some further facts from \citet{Bulinskaya:TPA1964}.

\begin{lemma} \label{lm:probability-bound}
If $x,x' \in \X$, $\epsilon_i =|\gamma_{n,i}(x') - \gamma_{n,i}(x)|$, and
$$
\kappa = \max \bigg\{\frac{c_p}{c_h+c_p}, \frac{(q + n_0 + 1) c_h+c}{(q + n_0 + 1)(c_h+c_p)} \bigg\},
$$
then for all $i \in [1:n-n_0)$ one has
\begin{equation*}%\label{eq:fracBound}
\sup_{w \in \R} \P( w \leq D_1 \leq w+\epsilon_i)
\leq \kappa < 1.
\end{equation*}
\end{lemma}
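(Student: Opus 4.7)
The plan is to combine an elementary geometric estimate on intervals of length $\epsilon_i$ with the explicit newsvendor-type formulas that \citet{Bulinskaya:TPA1964} derives for her base-stock levels.

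First, I would bound $\epsilon_i$ using the structure of the active rule \eqref{eq:base-stock-policy}. That rule says $\gamma_{n,i}(x) = \max(x, s_{n-i+1})$, and since $\X = [-(n_0+2)J,\, s_\infty]$ its image on $\X$ is contained in $[s_{n-i+1}, s_\infty]$. Thus
$$
\epsilon_i \;=\; |\gamma_{n,i}(x') - \gamma_{n,i}(x)| \;\leq\; s_\infty - s_{n-i+1}.
$$
For $i \in [1:n-n_0)$ one has $n-i+1 \geq n_0+2$, and the monotonicity \eqref{eq:base-stock-level-monotone} then gives $s_{n-i+1} \geq s_{n_0+2}$, so $\epsilon_i \leq \delta \stackrel{\rm def}{=} s_\infty - s_{n_0+2}$.

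Next comes the key geometric observation: since $\delta$ is exactly the length of the overlap $[s_{n_0+2}, s_\infty]$, every interval $[w, w+\delta]$ lies entirely in $(-\infty, s_\infty]$ or entirely in $[s_{n_0+2}, \infty)$---indeed, if $w+\delta > s_\infty$ then $w > s_\infty - \delta = s_{n_0+2}$. Since $\P(w \leq D_1 \leq w+\epsilon)$ is monotone in $\epsilon$, this yields
$$
\sup_{w \in \R} \P( w \leq D_1 \leq w+\epsilon_i ) \;\leq\; \max\bigl\{\, \P(D_1 \leq s_\infty),\; \P(D_1 \geq s_{n_0+2}) \,\bigr\}.
$$
To close the argument, I would invoke the first-order optimality conditions from \citeauthor{Bulinskaya:TPA1964}'s dynamic program, which identify these two tail probabilities as $c_p/(c_h + c_p)$ and $((q+n_0+1)c_h + c)/((q+n_0+1)(c_h+c_p))$ respectively---precisely the two quantities in the definition of $\kappa$. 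Strict inequality $\kappa < 1$ is then automatic: the first term is below $1$ because $c_h > 0$, and the second is below $1$ because $c < (q+n_0+1)c_p$ by the very definition \eqref{eq:n0} of $n_0$.

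The main obstacle is the identification of Bulinskaya's base-stock fractiles in the last step; her recursion defines $s_k$ implicitly through the derivatives of the cost-to-go functions, and one has to track the recursion carefully to extract the two explicit expressions above---but this is classical and can be lifted from the original paper. The geometric reduction in the first two steps is routine, and, notably, the soft unimodality of $\psi$ plays no role here, having already been consumed in Lemma \ref{lm:m-TC}.
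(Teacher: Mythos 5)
Your argument is essentially the paper's own: bound $\epsilon_i \leq s_\infty - s_{n_0+2}$ via the monotone base-stock structure, split on whether $[w, w+\epsilon_i]$ sits to the left of $s_\infty$ or to the right of $s_{n_0+2}$, and then feed in Bulinskaya's fractile estimates. The only imprecision is in the last step: you assert that $\P(D_1 \leq s_\infty)$ and $\P(D_1 \geq s_{n_0+2})$ \emph{equal} the two ratios appearing in $\kappa$, whereas Bulinskaya's Theorems~1 and~2 only give the one-sided bounds $s_\infty \leq \Psi^{-1}\!\big(c_p/(c_p+c_h)\big)$ and $s_{n_0+2} \geq \Psi^{-1}\!\big(((q+n_0+1)c_p - c)/((q+n_0+1)(c_p+c_h))\big)$; these inequalities (which run in the helpful direction) are all that is needed, and asserting exact equality for the limiting level $s_\infty$ in particular is not justified. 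With ``equals'' replaced by ``is at most,'' your proof matches the paper's.
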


\begin{proof}
Without any loss of generality, we again take $x \leq x' $.
Next, by the monotonicity of the restocking formula \eqref{eq:base-stock-policy}
and the monotonicity of the optimal stocking levels  \eqref{eq:base-stock-level-monotone}
we have the bounds
\begin{equation}\label{eq:GammaBracket}
s_{n_0+2} \leq \gamma_{n,i}(x) \leq \gamma_{n,i}(x') \leq s_\infty \quad \quad \text{for all } i \in [1:n-n_0).
\end{equation}
For $\epsilon_i = \gamma_{n,i}(x') - \gamma_{n,i}(x)$ this tells us
$$
0 \leq \epsilon_i = \gamma_{n,i}(x') - \gamma_{n,i}(x) \leq s_\infty - s_{n_0+2}.
$$
If $w +\epsilon_i \leq s_\infty$, then we have the trivial bound
\begin{equation}\label{ex:DBound1}
\P(w \leq D_i \leq w+\epsilon_i) \leq \P(D_i \leq s_\infty).
\end{equation}
On the other hand, if $w +\epsilon_i \geq s_\infty$ then by \eqref{eq:GammaBracket} we get
$w \geq s_{n_0+2}$
and we again have a trivial bound,
\begin{equation}\label{ex:DBound2}
\P(w \leq D_i \leq w+\epsilon_i) \leq \P(D_i \geq s_{n_0+2}).
\end{equation}

To make use of these two bounds, we need some estimates on the stocking levels $s_{n_0+2}$ and $s_\infty$. Fortunately,
from \citet[Theorems 1 and 2]{Bulinskaya:TPA1964}  we find that
for $\Psi(t)=\P(D_1\leq t)$ one has
$$
s_{n_0+2} \geq \Psi^{-1}\left( \frac{(q+ n_0 + 1)c_p - c}{(q+ n_0 + 1)(c_p+c_h)} \right)
\quad \text{and} \quad
s_\infty \leq \Psi^{-1}\left( \frac{c_p}{c_p+c_h}\right),
$$
or, in other words,
\begin{equation}\label{eq:ProbDbelowSinf}
\P(D_i \leq s_\infty) \leq \frac{c_p}{c_h+c_p}
\quad \quad \text{and} \quad \quad
\P(D_i \geq s_{n_0+2}) \leq  \frac{(q+ n_0 + 1) c_h + c}{(q+ n_0 + 1)(c_p+c_h)},
\end{equation}
Since $c_h>0$ the first ratio is always strictly less than one.  Also, by the definition  \eqref{eq:n0}
of $n_0$ we have the strict inequality $c < (q + n_0 + 1) c_p$, so the second ratio in \eqref{eq:ProbDbelowSinf}
is strictly less than one. This gives us $\kappa<1$, and, by the bounds \eqref{ex:DBound1}, \eqref{ex:DBound2},
and \eqref{eq:ProbDbelowSinf} the proof of the
lemma is complete.
\end{proof}

By Lemmas \ref{lm:m-TC} and \ref{lm:probability-bound}, we see that for each index $i$ in the principal range $[1:n - n_0)$ we have a uniform bound on the Dobrushin coefficient:
$$
\delta ( \Kn_{i,i+1} )
= \sup_{\substack{x, x' \in \X \\ A \in \B(\X)}}
    \abs{\Kn_{i,i+1}( x, A ) - \Kn_{i,i+1}( x', A ) }
\leq \kappa.
$$
In turn, the elementary bound \eqref{eq:DobrushinRelationship},
tells us in turn that for  the minimal ergodic coefficient
$\alpha_{n - n_0 }$ of the
non-homogeneous Markov chain $\{Z_{N,i}: 1 \leq i \leq n-n_0 \}$ we have
\begin{equation*}%\label{eq:lowerbound-alpha-ex1}
\alpha_{n - n_0 } \geq \min_{1 \leq i < n - n_0 } \{ 1 - \delta ( \Kn_{i,i+1} ) \}
\geq  1 - \kappa >0,
\end{equation*}
and this completes our first step in the proof of the CLT for $\C_n'(\pi^*_n)$.

\section{Step Two: Variance Lower Bound}\label{se:step2}

Here the plan is to estimate the variance of $\C_n(\pi^*_n)$ with help from a martingale that
we derive from the Bellman equation of dynamic programming.
In our case, this is a recursive equation for the values
$\{v_k(x): x \in \X \text{ and } 1\leq k \leq n\}$
that represent the expected total inventory cost over the
$k$ time periods in $[n-k+1:n]$ when
(i) we begin with an inventory level $x$ at the beginning of period $n-k+1$ and
(ii) we follow the optimal policy $\pi^*_n$ over the remaining $k$ periods.
In dynamic programming the map $x \mapsto v_k(x)$ is called the value function, or,
the \emph{cost-to-go function} since it represents the expected cost under the optimal policy
when there are $k$ remaining periods and the generalized inventory
at the beginning of period $n-k+1$ is equal to $x$.

If we now take $v_0 (x) \equiv 0$  for all $x \in \X$
as our terminal condition, then
we find by backward induction (or, equivalently, by the optimality principle of dynamic programming)
that for $k \in [1:n]$ the function $v_k(x)$ can be computed  by the recursion:
\begin{align}\label{eq:value-functions-inventory}
v_k(x) = \min_{y \geq x} \bigg\{ & c(y-x) + \E\big[Y_{n-k+1} L(y - D_{n-k+1})\big] \\
                                & + \E\big[(1-Y_{n-k+1}) L(x - D_{n-k+1})\big] + \E\big[v_{k-1}(y-D_{n-k+1})\big]\bigg\}. \notag
\end{align}
In particular, if the initial inventory level is $x$,
then one has
$$
v_n(x) = \E[\, \C_n(\pi^*_n)\,].
$$
\citet{Bulinskaya:TPA1964} established that
the minimum in \eqref{eq:value-functions-inventory} is uniquely attained at the value
$y= \gamma_{n,n-k+1}(x)$.
This is the order-up-to level for period $n-k+1$, so the inventory at the beginning of period $n-k+2$ is given by $\gamma_{n,n-k+1}(x)-D_{n-k+1}$.
Thus, from the beginning of that period forward, the expected remaining cost under the optimal strategy $\pi^*_n$ is given by
$v_{k-1}(\gamma_{n,n-k+1}(x)-D_{n-k+1})$.

The recursion \eqref{eq:value-functions-inventory} is usefully viewed as the sum of two pieces; specifically, we write
\begin{equation}\label{eq:value-functions-inventory-2}
v_k(x)=A_k(x) +B_k(x)
\end{equation}
where we define $A_k(\cdot)$ and  $B_k(\cdot)$ by setting
\begin{align}
A_k(x)&= c(\gamma_{n,n-k+1}(x)-x) +  \E[(1-Y_{n-k+1}) L(x - D_{n-k+1})], \quad \text{and} \notag \\
B_k(x)&= \E[Y_{n-k+1} L(\gamma_{n,n-k+1}(x) - D_{n-k+1})]+ \E[v_{k-1}(\gamma_{n,n-k+1}(x)-D_{n-k+1})]. \notag
\end{align}
By the analysis of \citet{Bulinskaya:TPA1964}, the order-up-to function
$\gamma_{n,n-k+1}(x)$ is given more explicitly by the
restocking rules \eqref{eq:base-stock-policy} and \eqref{eq:base-stock-policy-bis},
but the recursion \eqref{eq:value-functions-inventory-2} is independently useful. For example,
we can use it now to prove a technical lemma that we will need shortly.

\begin{lemma}\label{lm:Technical} If $x \leq 0$, $x'\leq 0$, and $n-k+1 \in [1:n-n_0)$, then one has
$$
v_k(x)-v_k(x')= (x'-x) \{ c+ c_p(1-q) \}.
$$
\end{lemma}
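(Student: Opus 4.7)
The plan is to combine the decomposition $v_k = A_k + B_k$ from \eqref{eq:value-functions-inventory-2} with the explicit form of the restocking rule inside the principal range. The hypothesis $n - k + 1 \in [1:n-n_0)$ is the same as $k \geq n_0 + 2$, so $s_k$ appears in the monotone sequence \eqref{eq:base-stock-level-monotone} and in particular is strictly positive. Since both $x$ and $x'$ satisfy $x, x' \leq 0 < s_k$, the active rule \eqref{eq:base-stock-policy} forces $\gamma_{n,n-k+1}(x) = \gamma_{n,n-k+1}(x') = s_k$.

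From this I would first observe that $B_k(x) = B_k(x')$, since $B_k$ depends on its argument only through $\gamma_{n,n-k+1}(\cdot)$, which now takes the common value $s_k$. So the lemma reduces to computing $A_k(x) - A_k(x')$. The second simplification I would invoke is that for $x \leq 0$ and $D_{n-k+1} \geq 0$ the random variable $x - D_{n-k+1}$ is almost surely non-positive, so $L$ collapses to its single linear branch $L(x - D_{n-k+1}) = c_p(D_{n-k+1} - x)$, and the same holds for $x'$. Substituting this together with $\E[1 - Y_{n-k+1}] = 1 - q$ into the definition of $A_k$, the expected-demand contributions cancel in the difference and one is left with $c(x' - x) + (1-q) c_p (x' - x) = (x'-x)\{c + c_p(1-q)\}$, as claimed.

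There is no genuine obstacle here: the calculation is essentially a substitution, and the two facts that make everything work are the strict positivity $s_k > 0$, supplied by \eqref{eq:base-stock-level-monotone}, and the non-negativity of the demand, which ensures $L$ is linear along both trajectories. I anticipate that the payoff of this lemma for the remainder of Step Two lies in giving an exactly affine behaviour of $v_k$ on the backlog region $(-\infty, 0]$; through the martingale attached to $v_k$ via the Bellman recursion, this should translate the randomness of the delay variables $Y_i$ into a uniform positive lower bound on the conditional variance increments, which is what the variance lower bound of Section \ref{se:step2} requires.
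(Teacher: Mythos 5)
Your proposal is correct and follows essentially the same route as the paper: identify $\gamma_{n,n-k+1}(x)=\gamma_{n,n-k+1}(x')=s_k$ on the backlog region within the principal range, conclude $B_k(x)=B_k(x')$, and then compute $A_k(x)-A_k(x')$ directly using linearity of $L$ on $(-\infty,0]$ together with $\E[1-Y_{n-k+1}]=1-q$. The only cosmetic difference is that you make explicit the observations $k\geq n_0+2$ and $s_k>0$, which the paper leaves implicit.
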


\begin{proof}
The crucial observation here is that for $n-k+1$ in the principal range $[1:n-n_0)$,
the restocking rule \eqref{eq:base-stock-policy} tells us that for  $x$ and $x'$ in $(-\infty, 0]$ we have
\begin{equation}\label{eq:GammaCost}
\gamma_{n,n-k+1}(x)=\gamma_{n,n-k+1}(x')=s_k.
\end{equation}
Thus, the formula for $B_k(\cdot)$ in the decomposition \eqref{eq:value-functions-inventory-2}
gives us $B_k(x)=B_k(x')$. By
the definition \eqref{eq:Def-L} of the carrying-cost function $L$, the independence of $Y_{n-k+1}$ and $D_{n-k+1}$, and a second application
of \eqref{eq:GammaCost}, we then have
\begin{align*}
v_k(x)-v_k(x')&=A_k(x)-A_x(x') \\
&=c(x'-x)+(1-q) \E[(-c_p)(x-D_{n-k+1}) + c_p(x'-D_{n-k+1})]\\
&=(x'-x) \{ c+ c_p(1-q)\},
\end{align*}
just as needed to complete the proof of the lemma.
\end{proof}

A more thematic benefit of the value functions $\{v_k: 0 \leq k \leq n \}$
is that they lead one to a useful martingale with respect to the natural filtration
$\{\F_{i}: 0 \leq i \leq n\}$
that was introduced in \eqref{eq:SigmaField}.

\begin{proposition}[Optimality Martingale]\label{pr:OptMart}
The process defined for $i \in [0:n]$ by
\begin{equation}\label{eq:Def-Opt-Mart}
\M_{n,i} =\C_i(\pi^*_n) + v_{n-i}(X_{n,i+1})=\C_i(\pi^*_n) + v_{n-i}(\gamma_{n,i}(X_{n,i}) -D_i)),
\end{equation}
is an $\{\F_{i}: 0 \leq i \leq n \}$ adapted martingale.
The martingale differences
\begin{equation}\label{eq:MartDiffs}
d_{n,i}\equiv \M_{n,i} -\M_{n,i-1} = \Ps_{n,i} + v_{n-i}(X_{n,i+1})-v_{n-i+1}(X_{n,i})
\end{equation}
then provide the representation
\begin{equation}\label{eq:MDS-rep-Cn}
\C_n(\pi^*_n) - \E[\C_n(\pi^*_n)] = \sum_{i=1}^n d_{n,i}.
\end{equation}
\end{proposition}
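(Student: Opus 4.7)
The plan is to verify the three claims of the proposition in sequence: adaptedness of $\M_{n,i}$, the martingale condition $\E[d_{n,i}\mid\F_{i-1}]=0$, and then the representation \eqref{eq:MDS-rep-Cn} by telescoping. The substance of the argument is essentially the repackaging of Bellman's equation \eqref{eq:value-functions-inventory} as a one-step martingale identity; everything else is bookkeeping.

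Adaptedness is routine. Each one-period cost $\Ps_{n,j}$ is a deterministic function of $X_{n,j}$, $Y_j$, and $D_j$, so it is $\F_j$-measurable, and consequently $\C_i(\pi^*_n)=\sum_{j=1}^i \Ps_{n,j}$ is $\F_i$-measurable. The recursion \eqref{eq:Xi-inventory} shows that $X_{n,i+1}=\gamma_{n,i}(X_{n,i})-D_i$ is $\F_i$-measurable, hence so is $v_{n-i}(X_{n,i+1})$, and therefore $\M_{n,i}$.

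The heart of the proof is the martingale identity, which should fall out of Bellman's recursion evaluated at its minimizer. Concretely, setting $k=n-i+1$ in \eqref{eq:value-functions-inventory} and using that the minimum is uniquely attained at $y=\gamma_{n,i}(x)$, one gets a pointwise identity expressing $v_{n-i+1}(x)$ as $c(\gamma_{n,i}(x)-x)$, plus the two expected carrying-cost terms, plus $\E[v_{n-i}(\gamma_{n,i}(x)-D_i)]$. I would then substitute $x=X_{n,i}$ and exploit two facts: $X_{n,i}$ is $\F_{i-1}$-measurable, while $(Y_i,D_i)$ is independent of $\F_{i-1}$. These let me rewrite the right-hand side as $\E[\Ps_{n,i}+v_{n-i}(X_{n,i+1})\mid \F_{i-1}]$, and rearranging the defining identity \eqref{eq:MartDiffs} then gives $\E[d_{n,i}\mid\F_{i-1}]=0$.

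Once the martingale property is in hand, the representation \eqref{eq:MDS-rep-Cn} is immediate by telescoping: $\sum_{i=1}^n d_{n,i}=\M_{n,n}-\M_{n,0}$. The terminal condition $v_0\equiv 0$ collapses $\M_{n,n}$ to $\C_n(\pi^*_n)$, while the deterministic initial inventory $X_{n,1}=x$ combined with the empty sum convention $\C_0(\pi^*_n)=0$ gives $\M_{n,0}=v_n(x)=\E[\C_n(\pi^*_n)]$. There is no real obstacle here — the only point requiring care is the measurability timing, namely that $X_{n,i}$ is one step ahead of the new information $(Y_i,D_i)$ revealed in period $i$, which is exactly what makes the independence step in the Bellman identity legitimate.
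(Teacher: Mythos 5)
Your proof is correct, and it reaches the martingale property by a slightly different route than the paper. You verify the increment condition $\E[d_{n,i}\mid\F_{i-1}]=0$ by substituting $x=X_{n,i}$ into the Bellman recursion \eqref{eq:value-functions-inventory} evaluated at its minimizer $y=\gamma_{n,i}(x)$, then using the $\F_{i-1}$-measurability of $X_{n,i}$ together with the independence of $(Y_i,D_i)$ from $\F_{i-1}$. The paper instead observes directly from the cost-to-go interpretation of the value function that $v_{n-i}(X_{n,i+1})=\E[\C_n(\pi^*_n)-\C_i(\pi^*_n)\mid\F_i]$, so $\M_{n,i}=\E[\C_n(\pi^*_n)\mid\F_i]$ is a Doob martingale outright, with no increment calculation needed. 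The two arguments are equivalent in substance --- what makes $v_{n-i}$ the cost-to-go is exactly the Bellman optimality you invoke --- but the paper's identification is shorter, while your version makes explicit how the one-step DP equation and the measurability/independence timing combine to produce the martingale; that added transparency is a reasonable trade for a few extra lines. One small omission: you should also note, as the paper does, that $\M_{n,i}$ is integrable (here it follows from the boundedness of the state space $\X$ and the piecewise linearity of $L$), since adaptedness plus the increment identity alone do not formally establish the martingale property without integrability.
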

\begin{proof}
The $\F_{i}$-measurability of $\M_{n,i}$ is obvious, and the integrability of $\M_{n,i}$
follows from the boundedness of state space $\X$ and the piecewise linearity of $L$.
Also, the second equality in \eqref{eq:Def-Opt-Mart} is simply given by the recursive identity \eqref{eq:Xi-inventory}.

To get the martingale property, we first note
that the definition of the value function gives us
$v_{n-i}(X_{n,i+1})\equiv \E[\C_{n}(\pi^*_n) - \C_{i}(\pi^*_n) \, | \F_{i}]$,
so, from the definition of $\M_{n,i}$, we have
\begin{align*}
\M_{n,i} =\C_i(\pi^*_n) + v_{n-i}(X_{n,i+1})
    &=\C_i(\pi^*_n)+\E[ \C_n(\pi^*_n)- \C_i(\pi^*_n) \, | \,\F_{i}]\\
    &=\E[ \C_n(\pi^*_n) \,|\, \F_{i}].
\end{align*}
This confirms that the process $\{\M_{n,i}, 0 \leq i \leq n\}$
is a martingale with respect to the increasing
sequence of $\sigma$-fields $\{\F_{i}: 0\leq i \leq n\}$.
It is, in fact, a Doob martingale.
Finally, we should note that since $\M_{n,0}=\E[\C_n(\pi^*_n)]$ and $\M_{n,n}=\C_n(\pi^*_n)$, we get the
representation \eqref{eq:MDS-rep-Cn} from the
telescoping sum of the differences.
\end{proof}

\begin{remark}[Uniform Boundedness Property]\label{re:BoundedMDS}
One can further prove that there is a constant $B<\infty$ such that
\begin{equation}\label{eq:BddMDS}
\norm{ d_{n,i} }_\infty \leq B \quad  \text{for all } 1\leq i \leq n< \infty,
\end{equation}
and, as we will explain in Section \ref{se:conclusions}, this fact has some useful implications.
Nevertheless, we do not pursue the proof of \eqref{eq:BddMDS} here since it
is not needed for the proof of Theorem \ref{thm:inventory-CLT}.
\end{remark}

We now have all of the tools in place to prove a key lemma.
It will lead  in turn to the completion of the second step in
the proof of the CLT for $\C_n'(\pi_n^*)$, and, from that point, the completion of the proof of Theorem \ref{thm:inventory-CLT}
is easy.

\begin{lemma}\label{lm:VarianceLowerBound}
There are constants $\beta>0$ and $N <\infty$ such that  for all $n\geq N$, one has the variance lower bound
\begin{equation}\label{eq:VarLowerBound}
\Var[\C_n(\pi^*_n)] = \sum_{i=1}^n \E[d_{n,i}^2] \geq \beta n.
\end{equation}
\end{lemma}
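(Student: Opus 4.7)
The plan is to leverage the martingale orthogonality from Proposition \ref{pr:OptMart}: since the $\{d_{n,i}\}$ are martingale differences, they are orthogonal in $L^2$, so
\[
\Var[\C_n(\pi^*_n)] = \sum_{i=1}^n \E[d_{n,i}^2].
\]
The task therefore reduces to showing that $\E[d_{n,i}^2]$ is bounded below by a positive constant uniformly for all $i$ in a sub-range of the principal range of size $\Omega(n)$. Writing $d_{n,i} = R_i - \E[R_i \mid \F_{i-1}]$ with $R_i := \Ps_{n,i} + v_{n-i}(X_{n,i+1})$, one has $\E[d_{n,i}^2] = \E[\Var[R_i \mid \F_{i-1}]]$, so it further reduces to an almost-sure pointwise lower bound on $\Var[R_i \mid \F_{i-1}]$.

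Fix $\omega$ and set $x = X_{n,i}(\omega)$, $y = \gamma_{n,i}(x)$. Conditioned on $\F_{i-1}$, $R_i$ is a function of $(D_i, Y_i)$; keeping only the variance contribution from $\{Y_i = 1\}$ yields
\[
\Var[R_i \mid \F_{i-1}] \geq q\, \Var[L(y - D_i) + v_{n-i}(y - D_i) \mid \F_{i-1}].
\]
The key input is Lemma \ref{lm:Technical}: provided $i \leq n - n_0 - 2$, the function $v_{n-i}$ is affine on $(-\infty, 0]$ with slope $-(c + c_p(1-q))$. Combined with $L(z) = -c_p z$ for $z < 0$, this means that on the event $\{D_i \geq y\}$ the integrand collapses to the explicit affine form $v_{n-i}(0) + (D_i - y)(c + c_p(2 - q))$. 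Restricting the variance to this event gives
\[
\Var[R_i \mid \F_{i-1}] \geq q \cdot \P(D_i \geq y) \cdot (c + c_p(2-q))^2 \cdot \Var[D_i \mid D_i \geq y].
\]

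It remains to verify uniformity in $x$. In both the active case ($y = s_{n-i+1}$) and the passive case ($y = x$ with $x > s_{n-i+1}$), the monotonicity \eqref{eq:base-stock-level-monotone} together with the state-space bound $x \leq s_\infty$ place $y$ in the fixed compact interval $[s_{n_0+2}, s_\infty]$. Bulinskaya's estimates \eqref{eq:ProbDbelowSinf} give $\P(D_i \geq y) \geq c_h/(c_h + c_p) > 0$, and since these same bounds also force $s_\infty < J$, one can pick two disjoint subintervals of $(s_\infty, J]$ carrying positive $\psi$-mass, from which a simple two-value argument yields $\inf_{y \in [s_{n_0+2}, s_\infty]} \Var[D_i \mid D_i \geq y] > 0$. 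Multiplying these bounds together produces a uniform $\E[d_{n,i}^2] \geq \beta_0 > 0$ for $i \in [1 : n - n_0 - 2]$, and summing yields $\Var[\C_n(\pi^*_n)] \geq \beta n$ for $n$ sufficiently large. The main obstacle is the isolation of the explicit affine form of $L + v_{n-i}$ on the negative axis -- for which Lemma \ref{lm:Technical} is essential -- and the simultaneous handling of the active and passive sub-cases of the restocking rule, which collapses once one notices that $y$ always lies in the compact interval $[s_{n_0+2}, s_\infty]$.
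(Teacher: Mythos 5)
Your proposal follows the paper's strategy closely: it uses the martingale orthogonality from Proposition \ref{pr:OptMart}, reduces the problem to a pointwise lower bound on the conditional variance of the one-step increment, and exploits Lemma \ref{lm:Technical} to collapse $L + v_{n-i}$ to an explicit affine function of $D_i$ on a large-demand event. The slope $c + c_p(2-q)$ you extract matches the constant $\rho$ in the paper. The one technical difference is that the paper symmetrizes --- introducing an independent copy $(Y_i', D_i')$, writing $\E[d_{n,i}^2 \mid \F_{i-1}] = \tfrac12 \E[\Delta_i^2 \mid \F_{i-1}]$, and restricting to $G_i = \{D_i, D_i' \in [s_\infty, J],\ Y_i = Y_i'\}$ --- whereas you split by the value of $Y_i$ through the conditional-variance decomposition. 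Both are legitimate, and your auxiliary observations (that $s_\infty < J$ and that $\P(D_i \geq y)$ is uniformly bounded below on $y \in [s_{n_0+2}, s_\infty]$ via \eqref{eq:ProbDbelowSinf}) are correct.

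There is, however, one genuine gap. Your inequality
\[
\Var[R_i \mid \F_{i-1}] \geq q\, \Var\big[L(y - D_i) + v_{n-i}(y - D_i) \mid \F_{i-1}\big]
\]
becomes vacuous when $q = 0$, which the model permits (every order delayed one period). For $q = 0$ the chain of estimates terminates at $0 \geq 0$, and the resulting constant $\beta_0$ degenerates to $0$, so Lemma \ref{lm:VarianceLowerBound} does not follow. The repair is easy and costs nothing: on the event $\{D_i \geq s_\infty\}$ one has both $y - D_i \leq 0$ and $X_{n,i} - D_i \leq 0$, so the $Y_i = 0$ branch $(1-Y_i)L(X_{n,i} - D_i) + v_{n-i}(y - D_i)$ is \emph{also} affine in $D_i$ with the identical slope $c + c_p(2-q)$. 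One can therefore either take whichever of $q$, $1-q$ is larger, or (as the paper in effect does) drop the split on $Y_i$ altogether and restrict only to $\{D_i \geq s_\infty\}$; since $D_i \perp Y_i$, the contribution of $Y_i$ to $\Var[R_i \mid D_i \geq s_\infty]$ is additive and nonnegative, leaving $(c + c_p(2-q))^2 \Var[D_i \mid D_i \geq s_\infty]$ intact. With that patch your argument is correct for every $q \in [0,1]$, and the remaining steps --- the two-interval argument giving a uniform lower bound on the truncated variance, and the final summation over the principal range --- are sound.
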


\begin{proof}
Our proof of \eqref{eq:VarLowerBound} will depend on bounding
the second conditional moment $\E[d_{n,i}^2|\F_{i-1}]$ from below, and, since $X_{n,i}$
is $\F_{i-1}$-measurable, we see that when we condition on $\F_{i-1}$ the random variable
$d_{n,i}$ is a function of just $Y_i$ and $D_i$. If we call this function $f_{n,i}$, then we can also write
$$
d_{n,i}=f_{n,i}(Y_i,D_i) \quad \text{given } \F_{i-1}.
$$
We now consider a pair $(Y_i', D_i')$ such that $(Y_i', D_i')$
is independent of $(Y_i, D_i)$ and has the same distribution as $(Y_i, D_i)$.
The random variables $f_{n,i}(Y_i,D_i)$ and $f_{n,i}(Y_i',D_i')$ are therefore independent.  Moreover, conditional on the $\sigma$-field
$\F_{i-1}$ they have the same distribution as $d_{n,i}$ so if we set
$\Delta_i=f_{n,i}(Y_i,D_i) -f_{n,i}(Y_i',D_i')$ then by a familiar calculation we have
\begin{equation}\label{eq:MDS-squared-representation}
\E[d_{n,i}^2\,|\, \F_{i-1} ]
= \frac{1}{2} \E[ \Delta_i^2 \,|\, \F_{i-1} ].
\end{equation}

Using the definition \eqref{eq:MartDiffs} and the inventory recursion \eqref{eq:Xi-inventory} can now write
\begin{equation}\label{eq:Def-fn}
f_{n,i}(Y_i,D_i)=c(\gamma_{n,i}(X_{n,i})-X_{n,i})+g_{n,i}(Y_i,D_i)+h_{n,i}(Y_i,D_i)
\end{equation}
where
$$
g_{n,i}(Y_i,D_i)=Y_iL(\gamma_{n,i}(X_{n,i})-D_i) + (1-Y_i)L(X_{n,i}-D_i)
$$
and
$$
h_{n,i}(Y_i,D_i)=v_{n-i}(\gamma_{n,i}(X_{n,i})-D_i)-v_{n-i+1}(X_{n,i}).
$$

To work toward a simple lower bound for the conditional expectation \eqref{eq:MDS-squared-representation},
we now consider the event
$$
G_i=\big\{\, \omega: \,
D_{i}(\omega)\in [s_\infty, J], \,
D'_{i}(\omega) \in  [s_\infty, J], \,\, \text{and} \,\,
Y_i(\omega) = Y'_i(\omega)\, \big\},
$$
where $J$ is the supremum of the support of the demand density $\psi$ and $s_\infty$ is the top stocking level in \eqref{eq:base-stock-level-monotone}.

Since $X_{n,i} \leq s_\infty$ and $\gamma_{n,i}(X_{n,i}) \leq s_\infty$, on $G_i$ we have relations
\begin{equation}\label{eq:BigDBounds}
X_{n,i+1}=\gamma_{n,i}(X_{n,i})-D_i \leq 0 \quad \text{and} \quad X'_{n,i+1}=\gamma_{n,i}(X_{n,i})-D_i'\leq 0.
\end{equation}
Now, if we use $\1(A)$ to denote the indicator function of an event $A$, then
for all $i \in [1:n-n_0)$  we have from Lemma \ref{lm:Technical} that
\begin{equation}\label{eq:h-difference}
\{ h_{n,i}(Y_i,D_i)-h_{n,i}(Y_i',D_i') \} \1(G_i) =(D_i-D_i')\{ c+ c_p(1-q) \}\1(G_i).
\end{equation}

On $G_i$ we also have $Y_i=Y_i'$ so by \eqref{eq:BigDBounds}
and by the definition \eqref{eq:Def-L} of the carrying-cost function $L$, we have
\begin{align*}\label{eq:ExpDif}
\{ g_{n,i}(Y_i,D_i) & -g_{n,i}(Y_i',D_i')\}  \1(G_i)  \notag\\
& = Y_i\big\{\,  L\big(\gamma_{n,i}(X_{n,i}) -D_i\big)-L\big(\gamma_{n,i}(X_{n,i}) -D_i'\big)\,\big\} \1(G_i)\\
& \quad \quad+ (1-Y_i)\big\{\, L\big(X_{n,i} -D_i)-L(X_{n,i} -D_i'\big)\, \big\}\1(G_i)\notag\\
& = Y_i\{ -c_p\big(\gamma_{n,i}(X_{n,i}) -D_i)\big)+c_p\big(\gamma_{n,i}(X_{n,i}) -D_i'\big)\}\1(G_i)\notag\\
& \quad \quad +(1-Y_i)\{ -c_p\big(X_{n,i} -D_i\big)+c_p\big(X_{n,i} -D_i'\big)\}\1(G_i)\notag\\
& = c_p(D_i-D_i')\1(G_i).\notag
\end{align*}

From the last relation, the identity \eqref{eq:h-difference},
and the definition \eqref{eq:Def-fn} of $f_{n,i}$, we have
for all $i \in [1:n-n_0)$ that
\begin{equation*}%\label{eq:fn-diff-final}
\{f_{n,i}(Y_i,D_i)-f_{n,i}(Y_i',D_i')\}\1(G_i)=(D_i-D_i') \{c+(2-q) c_p\}\1(G_i).
\end{equation*}
We also have $\Delta_i=\1(G_i) \Delta_i+\1(G_i^c) \Delta_i$ so
if we set $\rho \equiv \{c+(2-q) c_p\}>0$ we have
$
\Delta_i^2 \geq \1(G_i) \rho^2 (D_i - D'_i)^2.
$
Thus, by \eqref{eq:MDS-squared-representation} we see that for  all $i \in [1:n-n_0)$  we have the bound
\begin{align}\label{eq:CondExpLower}
\E[d_{n,i}^2\,|\, \F_{i-1} ]
&\geq  \frac{1}{2} \rho^2 \, \E[(D_i'-D_i)^2\1(G_i) \, | \, \F_{i-1} ]\\
&=\frac{1}{2} \rho^2 \, \int_{s_\infty}^J\int_{s_\infty}^J (u'-u)^2 \psi(u')\psi(u) \,du \equiv \beta'>0,\notag
\end{align}
and $\beta'$ is strictly positive because the first inequality of \eqref{eq:ProbDbelowSinf} implies that
the density $\psi$ cannot vanish identically on $[s_\infty, J]$.

Finally, using \eqref{eq:CondExpLower} in the range where it applies, we see that when
we take total expectations and sum we obtain
$$
\sum_{i=1}^n \E[d_{n,i}^2]\geq \sum_{i=1}^{n-n_0-1} \E[d_{n,i}^2]\geq \beta'(n-n_0-1).
$$
When we take $\beta=\beta'/2$ and $N=2(n_0+1)$, the proof of the lemma is complete.
\end{proof}

From our decomposition \eqref{eq:Cn-decomposition}
we have $\C_n(\pi^*_n)=\C_n'(\pi^*_n)+\C_n''(\pi^*_n)$ where we also have
$\norm{\C_n''(\pi^*_n)}_\infty \leq B(n_0+1)$ for a fixed constant $B$.
Thus, by the Cauchy-Schwarz inequality we find
\begin{equation}\label{eq:CSandVar}
\Var[\C_n(\pi^*_n)] =\Var[\C_n'(\pi^*_n)] +O(\sqrt{\Var[\C_n'(\pi^*_n)]}),
\end{equation}
and from Lemma \ref{lm:VarianceLowerBound}
we get that $\Var[\C_n'(\pi^*_n)] \sim \Var[\C_n(\pi^*_n)]$ as $ n\rightarrow \infty$.
This completes the second --- and last --- step in the
proof of the CLT for $\C_n'(\pi^*_n)$, i.e. we have
$$
\frac{\C_n'(\pi^*_n) - \E[\C_n'(\pi^*_n)]}{\sqrt{\Var[\C_n'(\pi^*_n)]}} \Longrightarrow N(0,1) \quad \quad \text{as } n \rightarrow \infty.
$$
Finally, by the boundedness of $\C_n''(\pi^*_n)$, it is trivial that
$$
\frac{\C_n''(\pi^*_n) - \E[\C_n''(\pi^*_n)]}{\sqrt{\Var[\C_n'(\pi^*_n)]}} \Longrightarrow 0 \quad \quad \text{as } n \rightarrow \infty,
$$
so, by summing the last two relations and using \eqref{eq:CSandVar} a second time,
we complete the proof of Theorem \ref{thm:inventory-CLT} by applying Theorem \ref{th:CLT-nonhomogeneous-chain} (or, more precisely, we apply the
special case given by
Corollary \ref{cor:CLT}).

\section{Further Connections and Open Problems} \label{se:conclusions}

In models where the optimality of a policy $\pi^*_n$ is defined purely in terms of expected costs there are good reasons to be
skeptical about the true economic value that is realized by the policy. A central limit theorem for the realized
cost, like our Theorem \ref{thm:inventory-CLT} for the
\citeauthor{Bulinskaya:TPA1964} inventory model, is one credible step toward the validation of the
mean-optimizing technology. Nevertheless, there are other steps one could take.

In particular, a concentration inequality can be used to make a similar point.
As we noted earlier in Remark \ref{re:BoundedMDS}, the martingale differences in our representation \eqref{eq:MDS-rep-Cn}
have the uniform boundedness property \eqref{eq:BddMDS}.
Consequently, by Hoeffding's inequality one finds for all $\lambda\geq 0$ that
\begin{equation}\label{eq:Hoeffding}
\P\big[\, \big|\C_n(\pi^*_n)-\E[\C_n(\pi^*_n)]\big|\, \geq \lambda \big]
 \leq 2 \exp\big\{-\lambda^2/(2nB)\, \big\}
\end{equation}
where
$B= \sup \{ \norm{d_{n,i}}_\infty :  1 \leq  i  \leq n \text{ and } 1 \leq n < \infty \}$.
This bound gives one an independent
argument that $\C_n(\pi_n^*)$ is well represented by $\E[\C_n(\pi^*_n)]$.
Specifically, it also tells us that deviations of order greater than
$O\{\E[\C_n(\pi^*_n)]^{1/2}\}=O(n^{1/2})$ have  small, well-specified, probabilities.

The proofs of both Theorem \ref{thm:inventory-CLT} and the concentration inequality \eqref{eq:Hoeffding}
make essential use of the assumption that the demand density $\psi$ has bounded support.
From a purely practical point of view, there is no problem with this assumption. Real-world demands are
bounded.
Nevertheless, in mathematical inventory theory one often allows for demand distributions that have unbounded support, so it is natural to ask if one has analogs of these results for such distributions.

The most direct approach to such an extension of Theorem \ref{thm:inventory-CLT} would be through the corresponding extension of the underlying CLT from
\citet{ArlSte:MOR2016} that we have used here. Such an extension
is credible, but it is challenging. Alternatively, one might try to extend Theorem \ref{thm:inventory-CLT} by
arguing from first principles, and here there is some encouragement from the unique
features of the random variable $\C_n(\pi_n^*)$. In particular, one might hope to exploit
the special structure of the martingale differences given by Proposition \ref{pr:OptMart},
but it is not easy to make more concrete suggestions.

On the other hand, since there are many flexible variations of Hoeffding's inequality,
there are sunnier prospects for concentration inequalities for $\C_n(\pi_n^*)$
when $\phi$ has unbounded support.
To be sure, such extensions are likely to lose the simplicity of \eqref{eq:Hoeffding},
but they could be useful in concrete contexts.

Finally, there is a third approach to the economic viability of
mean-optimal policies that can be built on the theory of stochastic orders.
In the past, the theory of stochastic orders has been used to explain how
properties of the transition kernels impact the the associated value functions
\citep[see, e.g.][]{Mueller:MOOR1997,SmithMcCardle:OR2002}, but here we have
something different in mind.

To be specific, we first
recall that a random variable $X$ is smaller than a random variable $Y$ in the (usual) stochastic
order if $\P(X>t) \leq \P(Y>t)$ for all $t \in \R$, and, in this case, we write $X \leq_{\rm st} Y$.
Now, if we consider the policy $\pi_n^*$ that is mean-optimal for the \citeauthor{Bulinskaya:TPA1964} model (or other stochastic optimization model), then it would be extraordinarily pleasing
if one could prove that
\begin{equation}\label{eq:SO-Problem}
\C_n(\pi^*_n) \leq_{\rm st} \C_n(\pi) \quad \text{for all } \pi \in \S
\end{equation}
where $\S$ is a class of feasible policies.

One of the many consequences that
would follow from \eqref{eq:SO-Problem} is that for each monotone increasing $\phi$ one would have
$$
\E[\phi(\C_n(\pi^*_n))] \leq  \E[\phi(\C_n(\pi))] \quad \text{for all } \pi \in \S.
$$
Since any utility function is monotone increasing, this would tell us that the mean-optimal
policy $\pi^*_n$ has \emph{minimal expected utility cost} for any utility.
To be sure, even this would not free one from the Saint Petersburg Paradox ---
essentially nothing does. Nevertheless, a bound like \eqref{eq:SO-Problem} would provide another important argument in the economic case for $\pi^*_n$.

For the moment, there is not quite enough evidence for \eqref{eq:SO-Problem} to put it forth as a conjecture. Still,
this kind of question is understudied,
and it provides a natural challenge. Moreover, there is considerable flexibility in how one might proceed.
For example, to take the investigation a step further,
one could consider any of the univariate orders in \citet{Shaked-Shanthikumar2007}
and ask again if one has the analog of \eqref{eq:SO-Problem} --- either for the \citeauthor{Bulinskaya:TPA1964} model, or for other
models that may be more tractable.

\enlargethispage{1cm}

\section*{Acknowledgement}

This material is based upon work supported by the National Science Foundation
under CAREER Award No. 1553274.

\bibliography{biblio-inventory}
\bibliographystyle{agsm}

\end{document}